\numberwithin{equation}{section}
\newtheorem{thm}{Theorem}[subsection]
\newtheorem{lem}[thm]{Lemma}
\newtheorem{prop}[thm]{Proposition}
\newcommand{\propref}[1]{Proposition~\ref{#1}}
\newcommand{\eqnref}[1]{~(\ref{#1})}
\begin{document}



\title{}

\title{ DJKM algebras I: Their universal central extension.}
\author{ Ben Cox}
\author{Vyacheslav Futorny}
\keywords{Krichever-Novikov Algebras,  Landau-Lifshitz differential equation, Date-Jimbo-Miwa-Kashiwara algebras, universal central extension, ultraspherical polynomials, elliptic integrals}
\address{Department of Mathematics \\
University of Charleston \\
66 George St.  \\
Charleston, SC 29424, USA}\email{coxbl@cofc.edu}
\address{Department of Mathematics\\
 University of S\~ao Paulo\\
 S\~ao Paulo, Brazil}
 \email{futorny@ime.usp.br}
 \begin{abstract} The purpose of this paper is to explicitly describe in terms of generators and relations the universal central extension of the infinite dimensional Lie algebra, $\mathfrak g\otimes \mathbb C[t,t^{-1},u|u^2=(t^2-b^2)(t^2-c^2)]$, appearing in the work of Date, Jimbo, Kashiwara and Miwa in their study of integrable systems arising from Landau-Lifshitz differential equation.  
 \end{abstract}
\date{}
\thanks{
 The second author were partially supported by Fapesp (processo 2005/60337-2) and CNPq (processo 301743/2007-0). The first author is grateful to the Fapesp (processo 2009/17533-6)  and the University of Sao Paulo for the support and hospitality during his visit to Sao Paulo. The first author was also partially supported by a research and development grant from the College of Charleston.}

\subjclass[2000]{Primary 17B37, 17B67; Secondary 81R10, 81B50}

\maketitle

\section{Introduction} 
In this paper the authors explicitly describe in terms of generators and relations and three families of polynomials, the universal central extension of an algebra appearing in work of Date, Jimbo, Kashiwara and Miwa (see \cite{MR701334,MR823315}), where they study integrable systems arising from Landau-Lifshitz differential equation.  Two of these families of polynomials are described below in terms of elliptic integrals and the other family is a variant of certain ultraspherical polynomials.  The  authors Date, Jimbo, Kashiwara and Miwa solved the Landau-Lifshitz equation using methods developed in some of their previous work on affine Lie algebras.  The hierarchy of this equation is written in terms of free fermions on an elliptic curve. The  infinite-dimensional Lie algebra mentioned above is shown to act on solutions of the Landau-Lifshitz equation as infinitesimal B\"acklund transformations where they derive an $N$-soliton formula.These authors arrive at an algebra that is a one dimensional central extension of $\mathfrak g\otimes \mathbb C[t,t^{-1},u|u^2=(t^2-b^2)(t^2-c^2)]$ where $b\neq \pm c$ are complex constants and $\mathfrak g$ is a simple finite dimensional Lie algebra defined over the complex numbers.  Below we explicitly describe its four dimensional universal central extension.  Modulo the center this algebra is a particular example of a Krichever-Novikov current algebra (see (\cite{MR902293}, \cite{MR925072}, \cite{MR998426}).  A fair amount of interesting and fundamental work has be done by Krichever, Novikov, Schlichenmaier, and Sheinman on the representation theory of certain one dimensional central extensions of these latter current algebras and of analogues of the Virasoso algebra. In particular Wess-Zumino-Witten-Novikov theory and analogues of the Knizhnik-Zamolodchikov equations are developed for  these algebras (see the survey article \cite{MR2152962}, and for example \cite{MR1706819}, \cite{MR1706819},\cite{MR2072650},\cite{MR2058804},\cite{MR1989644}, and \cite{MR1666274}).

M. Bremner on the other hand has explicitly described in terms of generators, relations and certain families of polynomials (ultraspherical and Pollacyk) the structure constants for the universal central extension of algebras of the form $\mathfrak g\otimes \mathbb C[t,t^{-1},u|u^2=p(t)]$ where $p(t)=t^2-2bt+1$ and $p(t)=t^3-2bt^2+t$ (see \cite{MR1249871,MR1303073}).  He determined more generally the dimension of the universal central extension for affine Lie algebras of the form $\mathfrak g\otimes R$ where $R$
is the ring of regular functions defined on an algebraic curve with any number of points removed. He obtained this using C.  Kassel's result (\cite{MR772062}) where one knows  that the center is  isomorphic  as a vector space to $\Omega_R^1/dR$ (the {\it space of K\"ahler  differentials  of $R$ modulo  exact forms}).    We will review this material below as needed. 

In previous work of the authors (see  \cite{MR2373448,MR2541818}) we used Bremner's aforementioned description to obtain certain free field realizations of the four point and elliptic affine algebras depending on a parameter $r=0,1$ which correspond to two different normal orderings.  These later realizations are analogues of Wakimoto type realizations which have been used by Schechtman and Varchenko and various other authors in the affine setting to pin down integral solutions to the Knizhnik-Zamolodchikov differential equations (see for example \cite{MR1136712}, \cite{MR1138049}, \cite{MR2001b:32028}, \cite{MR1077959}).   Such realizations have also been used in the study of Drinfeld-Sokolov reduction in the setting of $W$-algebras and in E. Frenkel's and B. Feigin's description of the center of the completed enveloping algebra of an affine Lie algebra (see \cite{MR1309540}, \cite{MR2146349}, and \cite{MR1187549}).   In future work we plan to use results of this paper to describe free field realizations of the universal central extension of the algebras of Date, Jimbo, Kashiwara and Miwa (which, since this is a mouth full, we will call DJKM algebras).

\section{Universal Central Extensions of Current Algebras}  Let $R$ be a commutative algebra defined over $\mathbb C$.
Consider the left $R$-module  with  action $f( g\otimes h ) = f g\otimes h$ for $f,g,h\in R$ and let $K$  be the submodule generated by the elements $1\otimes fg  -f \otimes g -g\otimes f$.   Then $\Omega_R^1=F/K$ is the module of K\"ahler differentials.  The element $f\otimes g+K$ is traditionally denoted by $fdg$.  The canonical map $d:R\to \Omega_R^1$ by $df  = 1\otimes f  + K$.  The {\it exact differentials} are the elements of the subspace $dR$.  The coset  of $fdg$  modulo $dR$ is denoted by $\overline{fdg}$.  As C. Kassel showed the universal central extension of the current algebra $\mathfrak g\otimes R$ where $\mathfrak g$ is a simple finite dimensional Lie algebra defined over $\mathbb C$, is the vector space $\hat{\mathfrak g}=(\mathfrak g\otimes R)\oplus \Omega_R^1/dR$ with Lie bracket given by
$$
[x\otimes f,Y\otimes g]=[xy]\otimes fg+(x,y)\overline{fdg},  [x\otimes f,\omega]=0,  [\omega,\omega']=0,
$$
  where $x,y\in\mathfrak g$, and $\omega,\omega'\in \Omega_R^1/dR$ and $(x,y)$  denotes the Killing  form  on $\mathfrak g$.

Consider the polynomial
$$
p(t)=t^n+a_{n-1}t^{n-1}+\cdots+a_0
$$
where $a_i\in\mathbb C$ and $a_n=1$. 
Fundamental to the description of the universal central extension for $R=\mathbb C[t,t^{-1},u|u^2=p(t)]$ is the following:
\begin{thm}[\cite{MR1303073},Theorem 3.4]  Let $R$ be as above.  The set 
$$
\{\overline{t^{-1}\,dt},\overline{t^{-1}u\,dt},\dots, \overline{t^{-n}u\,dt}\}
$$
 forms a basis of $\Omega_R^1/dR$ (omitting $\overline{t^{-n}u\ dt}$ if $a_0=0$).   
\end{thm}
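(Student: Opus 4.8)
The plan is to convert the computation of $\Omega_R^1/dR$ into a problem of linear algebra over $\mathbb C$: describe $\Omega_R^1$ and $dR$ explicitly, extract from the defining relation $u^2=p(t)$ a one‑parameter family of relations among the $\overline{t^m u\,dt}$, use it to prove the spanning statement, and finally prove linear independence by an ``extremal index'' argument. Write $R=\mathbb C[t,t^{-1}]\oplus\mathbb C[t,t^{-1}]u$. First I would check that $\Omega_R^1$ is generated over $\mathbb C[t,t^{-1}]$ by $dt$ and $du$ (with $dt^{-1}=-t^{-2}\,dt$), the only relations being the $R$‑multiples of $2u\,du-p'(t)\,dt=d(u^2-p(t))$; reducing $u\,du$ via this relation, every element has a normal form $(F_0+F_1u)\,dt+G\,du$ with $F_0,F_1,G\in\mathbb C[t,t^{-1}]$, and the only surviving relations among normal forms are the $\mathbb C[t,t^{-1}]$‑multiples of $p'(t)u\,dt-2p(t)\,du$. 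Since $d(h_0+h_1u)=h_0'\,dt+(h_1'u\,dt+h_1\,du)$ respects the decomposition $\Omega_R^1=\mathbb C[t,t^{-1}]\,dt\oplus N$ (with $N$ the submodule carrying the $u\,dt$‑ and $du$‑components), one gets $\Omega_R^1/dR\cong\mathbb C\,\overline{t^{-1}\,dt}\oplus M$, where $M$ is $\bigoplus_{m\in\mathbb Z}\mathbb C\,\overline{t^mu\,dt}$ modulo the relations
\[
\rho_k:\quad \sum_{i=0}^{n}(2k+3i)\,a_i\,\overline{t^{k+i-1}u\,dt}=0\qquad(k\in\mathbb Z),
\]
obtained by multiplying $2u\,du=p'(t)\,dt$ by $t^ku$, substituting $u^2=p(t)$, and using $\overline{t^k\,du}=-k\,\overline{t^{k-1}u\,dt}$ (which is just $\overline{d(t^ku)}=0$). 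The summand $\mathbb C\,\overline{t^{-1}\,dt}$ is cut out by the $dR$‑killing functional $\omega\mapsto$ (coefficient of $t^{-1}$ in the $dt$‑part of $\omega$), so it remains to show that $\overline{t^{-1}u\,dt},\dots,\overline{t^{-n}u\,dt}$ is a basis of $M$ (one term fewer if $a_0=0$).

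\textbf{Spanning.} Because $a_n=1$, for $m\ge0$ the relation $\rho_{m-n+1}$ has invertible top coefficient $2m+n+2$ and expresses $\overline{t^mu\,dt}$ through $\overline{t^ju\,dt}$ with $m-n\le j\le m-1$; descending induction on $m$ gives $M=\mathrm{span}\{\overline{t^ju\,dt}:j\le-1\}$. If $a_0\ne0$, then for $m\le-(n+1)$ the relation $\rho_{m+1}$ has invertible bottom coefficient $2(m+1)a_0$ and expresses $\overline{t^mu\,dt}$ through $\overline{t^ju\,dt}$ with $m+1\le j\le m+n\le-1$; ascending induction then gives $M=\mathrm{span}\{\overline{t^{-1}u\,dt},\dots,\overline{t^{-n}u\,dt}\}$. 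When $a_0=0$ the bottom term of each $\rho_k$ sits one slot higher, so the same argument yields the spanning set with $\overline{t^{-n}u\,dt}$ deleted. The fully degenerate cases $t^2\mid p(t)$ I would handle first by the substitution $u\mapsto u/t^j$, which leaves $R$ (hence $\Omega_R^1/dR$) unchanged while replacing $p$ by a polynomial of smaller degree not divisible by $t^2$; so it suffices to treat $a_0\ne0$ and, parallel to it, the case $a_0=0,\,a_1\ne0$.

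\textbf{Linear independence (the main point).} I must show $W\cap\mathrm{span}\{\overline{t^{-1}u\,dt},\dots,\overline{t^{-n}u\,dt}\}=0$, where $W=\mathrm{span}_{\mathbb C}\{\rho_k\}$. Suppose $\sum_{j=1}^nc_j\,\overline{t^{-j}u\,dt}=\sum_kd_k\rho_k$ with only finitely many $d_k\ne0$ and some $d_k\ne0$; set $K=\max\{k:d_k\ne0\}$. Since $\rho_k$ involves precisely the indices $k-1,\dots,k+n-1$, the basis vector $\overline{t^{K+n-1}u\,dt}$ occurs in the right‑hand side only through $\rho_K$, with coefficient $d_K(2K+3n)$ (here $a_n=1$ is used); if $K>-n$ then $K+n-1\ge0$ and $2K+3n>0$, so this coefficient is nonzero, contradicting that the left side is supported on $\{-n,\dots,-1\}$. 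Hence $K\le-n$. Then $K'=\min\{k:d_k\ne0\}\le-n$, and $\overline{t^{K'-1}u\,dt}$ (an index $\le-n-1$) occurs only through $\rho_{K'}$, with coefficient $d_{K'}\cdot2K'a_0$, which is nonzero because $a_0\ne0$ and $K'\ne0$ — again impossible. So no $d_k$ is nonzero, the left side vanishes, and the $\overline{t^{-j}u\,dt}$ are independent; the case $a_0=0,a_1\ne0$ runs identically with all lower indices shifted up by one. The routine‑but‑delicate part of the whole argument is pinning down the normal form of $\Omega_R^1$ so that no relations are lost or spuriously introduced; the real content is this independence step, and the reason it works is exactly that $a_n=1$ keeps the top coefficient $2K+3n$ of $\rho_K$ from vanishing once $K$ has been forced below $-n$, after which $a_0\ne0$ makes the bottom coefficient $2K'a_0$ nonzero — which is precisely why $a_0=0$ must be singled out.
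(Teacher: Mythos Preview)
The paper does not actually prove this theorem: it is quoted verbatim from Bremner \cite{MR1303073} and no argument is given. What the paper does supply, immediately afterwards, is the Lemma deriving the recursion \eqref{recursionreln} (your relations $\rho_k$, for general $m$ in $u^m=p(t)$); that is precisely the spanning half of your argument. So there is nothing to compare on the paper's side beyond this.

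Your proof itself is correct and self-contained. The reduction of $\Omega_R^1$ to the normal form $\mathbb C[t,t^{-1}]\,dt\oplus N$ with the single family of relations $t^k\bigl(p'(t)u\,dt-2p(t)\,du\bigr)$ is right (one checks it from the presentation $\Omega_R^1=(R\,dt\oplus R\,du)/R\,(2u\,du-p'(t)\,dt)$ together with $R=\mathbb C[t,t^{-1}]\oplus\mathbb C[t,t^{-1}]u$), and after passing to $\Omega_R^1/dR$ these become exactly your $\rho_k$. Your spanning argument via the top and bottom coefficients $2m+n+2$ and $2(m+1)a_0$ matches the recursion in the paper's Lemma (specialized to $m=2$ there, but the mechanism is identical). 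The extremal-index linear-independence argument is the genuine addition over what the paper records, and it is sound: once $K\le -n$ is forced by $a_n=1$, the bottom coefficient $2K'a_0$ with $K'\le -n<0$ finishes it. One small remark: your reduction $u\mapsto u/t^j$ for the case $t^2\mid p$ changes both $n$ and the basis size, so it does not literally verify the theorem as stated in that degenerate case; but under the natural hypothesis that $p$ has simple roots (so that $R$ is the coordinate ring of a smooth curve, which is the setting of Bremner's theorem), $a_0=0$ forces $a_1\neq0$ and the issue does not arise.
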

Set $u^m=p(t)$.  Then $u\,d(u^m)=mu^mdu$ and
$$
\sum_{j=1}^nja_jt^{j-1}u\,dt-m\left(\sum_{j=0}^na_jt^j\,du\right)=0
$$
or 
$$
p'(t)udt-mp(t)du=0.
$$
Multiplying by $t^i$ we get 
\begin{equation}\label{lady1}
\sum_{j=1}^nja_jt^{i+j-1}u\,dt-m\left(\sum_{j=0}^na_jt^{i+j}\,du\right)=0
\end{equation}

\begin{lem}  If $u^m=p(t)$ and $R=\mathbb C[t,t^{-1},u|u^m=p(t)]$, then in $\Omega_R^1/dR$, one has
\begin{equation}\label{recursionreln}
((m+1)n+im)t^{n+i-1}u\,dt \equiv - \sum_{j=0}^{n-1}((m+1)j+mi)a_jt^{i+j-1}u\,dt\mod dR
\end{equation}
\end{lem}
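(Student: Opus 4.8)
The plan is to begin from the equation \eqnref{lady1}, which is a genuine identity in $\Omega_R^1$ (not merely a congruence modulo $dR$), and to eliminate the differentials $du$ in favor of expressions of the form $t^\ell u\,dt$ modulo exact forms. The basic observation is that $d(t^ku)=kt^{k-1}u\,dt+t^k\,du$ holds in $\Omega_R^1$ for every $k\in\mathbb Z$, hence
\begin{equation}\label{dutrade}
t^k\,du\equiv -kt^{k-1}u\,dt\pmod{dR}.
\end{equation}
Substituting \eqnref{dutrade} (with $k=i+j$) into each summand of $\sum_{j=0}^na_jt^{i+j}\,du$ appearing in \eqnref{lady1} turns that identity into a congruence modulo $dR$ involving only the monomials $t^{i+j-1}u\,dt$.

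Next I would collect the coefficient of $a_jt^{i+j-1}u\,dt$ for each $j$. From the first sum in \eqnref{lady1} the term $j=0$ contributes nothing, while $j\ge 1$ contributes $j$; from the second sum, after applying \eqnref{dutrade}, every $j$ with $0\le j\le n$ contributes $m(i+j)$. Adding these, the coefficient is $((m+1)j+mi)$ for all $j$ in the range $0\le j\le n$, including $j=0$, where it correctly reduces to $mi$. Thus \eqnref{lady1} becomes
\begin{equation}
\sum_{j=0}^n\bigl((m+1)j+mi\bigr)a_j\,t^{i+j-1}u\,dt\equiv 0\pmod{dR}.
\end{equation}

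Finally, using $a_n=1$, the top term $j=n$ equals $((m+1)n+mi)\,t^{n+i-1}u\,dt$; transposing the terms with $0\le j\le n-1$ to the other side yields precisely \eqnref{recursionreln}. The computation is routine, and the only points demanding a little care are that \eqnref{lady1} must be used as an \emph{equality} in $\Omega_R^1$, so that the substitution of the congruence \eqnref{dutrade} into it is legitimate, and that the $j=0$ term---which is present only because of the $du$-sum---already carries the coefficient $(m+1)\cdot 0+mi$, so that it folds into the sum over $0\le j\le n-1$ on the right-hand side without a separate case. I do not anticipate a genuine obstacle.
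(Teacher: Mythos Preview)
Your proof is correct and follows essentially the same route as the paper: starting from \eqnref{lady1}, replacing each $t^{i+j}\,du$ by $-(i+j)t^{i+j-1}u\,dt$ via the exactness of $d(t^{i+j}u)$, combining the coefficients into $((m+1)j+mi)a_j$, and isolating the $j=n$ term using $a_n=1$. Your extra remarks on the legitimacy of the substitution and the $j=0$ case are sound but do not constitute a different approach.
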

\begin{proof}
We have expanding $d(t^{i+j}u)$
$$
(i+j)t^{i+j-1}u\,dt\equiv-t^{i+j}\,du\mod dR.
$$
so that \eqnref{lady1} implies 
\begin{equation}
\sum_{j=0}^nja_jt^{i+j-1}u\,dt+m\left(\sum_{j=0}^n(i+j)a_jt^{i+j-1}u\,dt\right)=0 \mod dR
\end{equation}
or
\begin{equation}
\sum_{j=0}^n((m+1)j+mi)a_jt^{i+j-1}u\,dt\equiv 0\mod dR
\end{equation}
This gives \eqnref{recursionreln}.

\end{proof}

\section{Description of the universal central extension of Date-Jimbo-Miwa-Kashiwara algebras}
In the Date-Jimbo-Miwa-Kashiwara  setting one takes $m=2$ and $p(t)=(t^2-a^2)(t^2-b^2)=t^4-(a^2+b^2)t^2+(ab)^2$ with $a\neq \pm b$ and neither $a$ nor $b$ is zero.  We fix from here onward $R=\mathbb C[t,t^{-1},u\,|\,u^2= (t^2-a^2)(t^2-b^2)]$.  As in this case $a_0=(ab)^2$, $a_1=0$, $a_2=-(a^2+b^2)$, $a_3=0$ and $a_4=1$, then letting $k=i-2$ the recursion relation in \eqnref{recursionreln} looks like
\begin{align*}
(6+2k)\overline{t^{k}u\,dt}   
&=-2(k-3)(ab)^2\overline{t^{k-4}u\,dt} +2k(a^2+b^2)\overline{t^{k-2}u\,dt}.
\end{align*}
After a change of variables we may assume that $a^2b^2=1$.   Then the recursion relation looks like
\begin{equation}\label{recursionreln1}
(6+2k)\overline{t^{k}u\,dt}   
=-2(k-3)\overline{t^{k-4}u\,dt} +4kc\overline{t^{k-2}u\,dt},
\end{equation}
after setting $c=(a^2+b^2)/2$, so that $p(t)=t^4-2ct^2+1$.  Let $P_k:=P_k(c)$ be the polynomial in $c$ satisfy the recursion relation 
$$
(6+2k)P_k(c)   
=4k cP_{k-2}(c)-2(k-3)P_{k-4}(c)
$$
for $k\geq 0$.
Then set
$$
P(c,z):=\sum_{k\geq -4}P_k(c)z^{k+4}=\sum_{k\geq 0}P_{k-4}(c)z^{k}.
$$
so that after some straightforward rearrangement of terms we have
\begin{align*}
0&=\sum_{k\geq 0}(6+2k)P_k(c)z^k 
-4c\sum_{k\geq 0}kP_{k-2}(c)z^{k} +2\sum_{k\geq 0}(k-3)P_{k-4}(c)z^{k}  \\
&=(-2z^{-4} +8cz^{-2}-6)P(c,z) +(2z^{-3}-4cz^{-1}+2z)\frac{d}{dz}P(c,z)  \\
&\quad+(2z^{-4}-8cz^{-2})P_{-4}(c)  -4cP_{-3}(c)  z^{-1} -2P_{-2}(c)z^{-2} -4P_{-1}(c)z^{-1}.
\end{align*}
We then multiply the above through by $z^{4}$ to get 
\begin{align*}
0&=(-2+8cz^{2}-6z^4)P(c,z) +(2z-4cz^{3}+2z^5)\frac{d}{dz}P(c,z)  \\
&\quad+(2-8cz^{2})P_{-4}(c)  -4cP_{-3}(c)  z^{3} -2P_{-2}(c)z^{2} -4P_{-1}(c)z^{3}.
\end{align*}

Hence $P(c,z)$ must satisfy the differential equation
\begin{equation}\label{funde}
\frac{d}{dz}P(c,z)-\frac{3z^4-4c z^2+1}{z^5-2cz^3+z}P(c,z)=\frac{2\left(P_{-1}+cP_{-3} \right)z^3 +P_{-2} z^2+(4cz^2-1)P_{-4} }{z^5-2cz^3+z}
\end{equation}
This has integrating factor
\begin{align*}
\mu(z)&
=\exp \int\left( \frac{-2 \left(z^3-cz\right)}{1-2 c z^2+z^4 }-\frac{1}{z}\right)\,dz  \\
&=\exp(-\frac{1}{2} \ln(1-2 c z^2+z^4)-\ln (z))=\frac{1}{z\sqrt{1-2 c z^2+z^4}}.
\end{align*}

\subsection{Elliptic Case 1}
If we take initial conditions $P_{-3}(c)=P_{-2}(c)=P_{-1}(c)=0$ and $P_{-4}(c)=1$ then we arrive at a generating function 
$$
P_{-4}(c,z):=\sum_{k\geq -4}P_{-4,k}(c)z^{k+4}=\sum_{k\geq 0}P_{-4,k-4}(c)z^{k},
$$
defined in terms of an elliptic integral
\begin{align*}
P_{-4}(c,z)&=z\sqrt{1-2 c z^2+z^4}\int \frac{4cz^2-1}{z^2(z^4-2c z^2+1)^{3/2}}\, dz.
\end{align*}
 One way to interpret the right hand integral is to expand $(z^4-2c z^2+1)^{-3/2}$ as a Talyor series about $z=0$ and then formally integrate term by term and multiply the result by the Taylor series of $z\sqrt{1-2 c z^2+z^4}$.    More precisely one integrates formally with zero constant term
 $$
 \int (4c-z^{-2})\sum_{n=0}^\infty Q_n^{(3/2)}(c)z^{2n}\,dz =\sum_{n=0}^\infty \frac{4cQ_n^{(3/2)}(c)}{2n+1}z^{2n+1} -\sum_{n=0}^\infty \frac{Q_n^{(3/2)}(c)}{2n-1}z^{2n-1}
 $$ 
 where $Q_n^{(\lambda)}(c)$ is the $n$-th Gegenbauer polynomial.
After multiplying this by 
$$
z\sqrt{1-2cz^2+z^4}=\sum_{n=0}^\infty Q_n^{(-1/2)}(c)z^{2n+1}
$$
one arrives at the series $P_{-4}(c,z)$.

\subsection{Elliptic Case 2}
If we take initial conditions $P_{-4}(c)=P_{-3}(c)=P_{-1}(c)=0$ and $P_{-2}(c)=1$ then we arrive at a generating function defined in terms of another elliptic integral:
\begin{align*}
P_{-2}(c,z)&=z\sqrt{1-2 c z^2+z^4}\int \frac{1}{ (z^4-2c z^2+1)^{3/2}}\, dz.
\end{align*}

\subsection{Gegenbauer  Case 3}
If we take $P_{-1}(c)=1$, and $P_{-2}(c)=P_{-3}(c)=P_{-4}(c)=0$ and set 
$$
P_{-1}(c,z)=\sum_{n\geq 0}P_{-1,n-4}z^n,
$$
then we get a solution which after solving for the integration constant can be turned into a power series solution 
\begin{align*}
P_{-1}(c,z)&=(z\sqrt{1-2 c z^2+z^4})\left(\int \frac{2cz^3}{t\sqrt{1-2 c z^2+z^4}(z^5-2c z^3+z)}\, dt+C\right)  \\
&=\frac{ z(c-z^3)}{c^2-1}-\frac{c}{c^2-1}z\sqrt{z^4-2cz^2+1} \\
&=\frac{1}{c^2-1}\left(cz-z^3-cz\sqrt{z^4-2c z^2+1}\right)  \\
&=\frac{1}{c^2-1}\left(cz-z^3-\sum_{k=0}^\infty c Q_n^{(-1/2)}(c)z^{2n+1}\right)  \\
&=\frac{1}{c^2-1}\left(cz-z^3-cz+c^2z^3-\sum_{k=2}^\infty c Q_n^{(-1/2)}(c)z^{2n+1}\right)  
\end{align*}
where $Q^{(-1/2)}_n(c)$ is the $n$-th Gegenbauer polynomial.   Hence
\begin{align*}
P_{-1,-4}(c)&=P_{-1,-3}(c)=P_{-1,-2}(c) =P_{-1,2m}(c)=0, \\
P_{-1,-1}(c)&=1,  \\
P_{-1,2n-3}(c)&=\frac{-cQ_{n}(c)}{c^2-1},
\end{align*}
for $m\geq 0$ and $n\geq 2$ .
The $Q^{(-1/2)}_n(c)$ are known to satisfy the second order differential equation:
\begin{align*}
(1-c^2)\frac{d^2}{d^2 c}Q^{(-1/2)}_n(c)+n(n-1)Q^{(-1/2)}_{n}(c)=0
\end{align*}
so that the $P_{-1,k}:=P_{-1,k}(c)$ satisfy the second order differential equation
\begin{align*}
(c^4-c^2)\frac{d^2}{d^2 c}P_{-1,2n-3}+2c(c^2+1)\frac{d}{d  c}P_{-1,2n-3}+(-c^2n(n-1)-2)P_{-1,2n-3}=0
\end{align*}
for $n\geq 2$.

\subsection{Gegenbauer Case 4}
Next we consider the initial conditions $P_{-1}(c)=0=P_{-2}(c)=P_{-4}(c)=0$ with $P_{-3}(c)=1$ and set 
$$
P_{-3}(c,z)=\sum_{n\geq 0}P_{-3,n-4}(c)z^n,
$$
then we get a power series solution
\begin{align*}
P_{-3}(c,z)&=(z\sqrt{1-2 c z^2+z^4})\left(\int \frac{2cz^3}{z\sqrt{1-2 c z^2+z^4}(z^5-2c z^3+z)}\, dz+C\right)  \\
&=\frac{ cz(c-z^3)}{c^2-1}-\frac{1}{c^2-1}z\sqrt{z^4-2cz^2+1} \\
&=\frac{1}{c^2-1}\left(c^2z-cz^3-z\sqrt{z^4-2c z^2+1}\right)  \\
&=\frac{1}{c^2-1}\left(c^2z-cz^3-\sum_{k=0}^\infty Q_n^{(-1/2)}(c)z^{2n+1}\right)  \\
&=\frac{1}{c^2-1}\left(c^2z-cz^3-z+cz^3-\sum_{k=2}^\infty Q_n^{(-1/2)}(c)z^{2n+1}\right)  \\
\end{align*}
where $Q^{(-1/2)}_n(c)$ is the $n$-th Gegenbauer polynomial.  Hence
\begin{align*}
P_{-3,-4}(c)&=P_{-3,-2}(c)=P_{-3,-1}(c) =P_{-1,2m}(c)=0, \\
P_{-3,-3}(c)&=1,  \\
P_{-3,2n-3}(c)&=\frac{-Q_{n}(c)}{c^2-1},
\end{align*}
for $m\geq 0$ and $n\geq 2$ and hence \begin{align*}
(c^2-1)\frac{d^2}{d^2 c}P_{-3,2n-3}+4c \frac{d}{d c}P_{-3,2n-3} -(n+1)(n-2)P_{-3,2n-3} =0
\end{align*}
for $n\geq 2$ and $P_{-1,2n-3}=cP_{-3,2n-3}$ for $n\geq 2$.
\section{Main result}
First we give an explicit description of the cocyles contributing to the {\it even} part of the DJKM algebra. 

\begin{prop}[cf. \cite{MR1303073}, Prop. 4.2] \label{cocyclecalc} Set $\omega_0=\overline{t^{-1}\,dt}$. 
 For $i,j\in\mathbb Z$ one has
\begin{equation}
t^i\,d(t^j)= j \delta_{i+j,0}\omega_0
\end{equation}
and 
\begin{equation}
t^{i-1}u\,d(t^{j-1}u)=\left(\delta_{i+j,-2}(j+1) -2cj\delta_{i+j,0} +(j-1)\delta_{i+j,2}\right)\omega_0.
\end{equation}

\end{prop}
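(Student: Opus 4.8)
The plan is to compute both cocycle formulas directly in $\Omega_R^1/dR$ using the defining relation $u^2 = p(t) = t^4 - 2ct^2 + 1$ together with the recursion relation \eqnref{recursionreln1} and the fact, guaranteed by the cited theorem of Bremner, that $\{\omega_0, \overline{t^{-1}u\,dt}, \overline{t^{-2}u\,dt}, \overline{t^{-3}u\,dt}, \overline{t^{-4}u\,dt}\}$ is a basis of $\Omega_R^1/dR$. The first formula is the easier one: for $i,j \in \mathbb Z$ the form $t^i\,d(t^j) = j\,t^{i+j-1}\,dt$, and since $d(t^{i+j}) = (i+j)t^{i+j-1}\,dt$ is exact, $t^{i+j-1}\,dt \equiv 0 \pmod{dR}$ unless $i+j = 0$, in which case $t^i\,d(t^j) = j\,t^{-1}\,dt = j\,\omega_0$. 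This gives $t^i\,d(t^j) \equiv j\,\delta_{i+j,0}\,\omega_0$ exactly as claimed.

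For the second formula I would first expand $t^{i-1}u\,d(t^{j-1}u)$. Using the product rule, $d(t^{j-1}u) = (j-1)t^{j-2}u\,dt + t^{j-1}\,du$, so
\[
t^{i-1}u\,d(t^{j-1}u) = (j-1)t^{i+j-3}u^2\,dt + t^{i+j-2}u\,du.
\]
Now substitute $u^2 = t^4 - 2ct^2 + 1$ in the first term, and in the second term use $u\,du = \tfrac12\,d(u^2) = \tfrac12\,p'(t)\,dt = (2t^3 - 2ct)\,dt$, which is itself of the form (polynomial in $t$)$\,dt$; by the first part all such terms reduce to a multiple of $\omega_0$ (nonzero only when the exponent is $-1$). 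So everything collapses to a $\mathbb C$-linear combination of $\omega_0$ and terms $\overline{t^{i+j+\ell-3}u\,dt}$ for $\ell \in \{0,2,4\}$ coming from the $(j-1)t^{i+j-3}u^2\,dt$ piece. Concretely one gets a contribution from $\overline{t^{i+j+1}u\,dt}$, $\overline{t^{i+j-1}u\,dt}$, and $\overline{t^{i+j-3}u\,dt}$. Wait — I must be more careful: the $u\,du$ term, after substituting $u\,du = (2t^3-2ct)\,dt$, gives $t^{i+j-2}(2t^3-2ct)\,dt = 2t^{i+j+1}\,dt - 2c\,t^{i+j-1}\,dt$, and these are honest exact-up-to-constant $dt$-forms, contributing $2(i+j+1)^{-1}\cdot 0$ — i.e. they vanish mod $dR$ unless $i+j+1 = 0$ or $i+j-1 = 0$. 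So this piece contributes $2\delta_{i+j,-2}\omega_0$-type and $-2c\,\delta_{i+j,0}$-type terms with the appropriate coefficients from the first part; I would track these carefully.

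The remaining work is to show that the surviving $u\,dt$-terms $\overline{t^{k}u\,dt}$ with $k \in \{i+j+1,\, i+j-1,\, i+j-3\}$ together conspire, via the recursion \eqnref{recursionreln1}, to vanish unless $i+j \in \{-2, 0, 2\}$, and to extract the stated coefficients $(j+1)$, $-2cj$, $(j-1)$ in those three cases. The cleanest route is to observe that the three terms are exactly (a multiple of) the left- and right-hand sides of the recursion relation with $k = i+j-1$: indeed \eqnref{recursionreln1} reads $(2k+6)\overline{t^k u\,dt} + 2(k-3)\overline{t^{k-4}u\,dt} - 4ck\,\overline{t^{k-2}u\,dt} = 0$, so any linear combination proportional to this vanishes mod $dR$. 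I would check that the combination produced above is a scalar multiple of this relation precisely when $i+j \notin \{-2,0,2\}$ — equivalently, when none of the exponents $i+j+1, i+j-1, i+j-3$ forces a "boundary" basis element outside the span — and that at the boundary values the defect is exactly the claimed multiple of $\omega_0$. Since $\omega_0$ itself arises only from the $dt$-terms, the bookkeeping reduces to a finite check at $i+j = -2, 0, 2$ (and, symmetrically, the exponents $-2, -4$ allowed by $a_0 \ne 0$, $a_2 \ne 0$). The main obstacle is purely organizational: keeping the three families of $u\,dt$-terms and the several $dt$-terms aligned so that the recursion cancels the generic part and leaves the right $\omega_0$-coefficient; there is no conceptual difficulty once one commits to substituting $u^2 = p(t)$, $u\,du = \tfrac12 p'(t)\,dt$, and applying \eqnref{recursionreln1} with $k = i+j-1$.
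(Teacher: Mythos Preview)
Your setup is correct through the product-rule expansion
\[
t^{i-1}u\,d(t^{j-1}u) = (j-1)t^{i+j-3}u^2\,dt + t^{i+j-2}u\,du,
\]
and you correctly handle the second summand via $u\,du = (2t^3-2ct)\,dt$. The error is in the first summand: once you substitute $u^2 = t^4 - 2ct^2 + 1$, the factor $u^2$ is a \emph{polynomial in $t$}, so $(j-1)t^{i+j-3}u^2\,dt$ contains no $u$ whatsoever. There are no $\overline{t^k u\,dt}$ terms anywhere in this computation, and the recursion \eqnref{recursionreln1} is entirely irrelevant to this proposition.

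Concretely, the whole expression is
\[
(j-1)\bigl(t^{i+j+1}-2ct^{i+j-1}+t^{i+j-3}\bigr)\,dt + 2\bigl(t^{i+j+1}-ct^{i+j-1}\bigr)\,dt
= (j+1)t^{i+j+1}\,dt - 2cj\,t^{i+j-1}\,dt + (j-1)t^{i+j-3}\,dt,
\]
which is a sum of three $t^k\,dt$ terms. By your first part, each $\overline{t^k\,dt}$ vanishes unless $k=-1$, and the three Kronecker deltas drop out immediately. This is exactly the paper's argument, and it is a two-line computation; the elaborate plan to match the three (nonexistent) $u\,dt$ terms against the recursion relation, and to detect ``boundary'' behavior at $i+j\in\{-2,0,2\}$, is chasing a phantom. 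The only thing to fix is to notice that $u^2\,dt$ has even $u$-degree and hence lies in $\mathbb C[t,t^{-1}]\,dt$.
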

\begin{proof}  First observe that 
$2u\,du=d(u^2)=(4t^3-4c t)\,dt$.
 The second congruence then follows from
\begin{align*}
t^{i-1}u\,d(t^{j-1}u)&=(j-1)t^{i+j-3}u^2\,dt+t^{i+j-2}u\,du \\
&=(j-1)t^{i+j-3}(t^4-2ct^2+1)\,dt+2t^{i+j-2}(t^3-c t)\,dt \\
&=(j-1)(t^{i+j+1}-2ct^{i+j-1}+t^{i+j-3})\,dt+2(t^{i+j+1}-c t^{i+j-1})\,dt \\
&=(j+1)t^{i+j+1}\,dt -2cjt^{i+j-1}\,dt +(j-1)t^{i+j-3}\,dt. 
\end{align*}
\end{proof}

The map $\sigma:R\to R$ given by $\sigma(t)=t^{-1}$, $\sigma(u)=t^{-2}u$ is an algebra automorphism as 
$\sigma(u^2)=t^{-4}u^2=1-2ct^{-2}+t^{-4}=\sigma(1-2ct^{2}+t^{4})$.  This descends to a linear map $\sigma:\Omega_R^1/dR$ where
\begin{align*}
\sigma(\overline{t^{-1}\,dt})&=-\overline{t^{-1}\,dt},\\
\sigma(\overline{t^{-1}u\,dt)}&=\overline{t(t^{-2}u)d(t^{-1})}=-\overline{t^{-3}u\,dt},  \\
\sigma(\overline{t^{-2}u\,dt)}&=\overline{t^2(t^{-2}u)d(t^{-1})}=-\overline{t^{-2}u\,dt}, \\
\sigma(\overline{t^{-3}u\,dt)}&=-\overline{t^{-1}u\,dt}, \\ 
\sigma(\overline{t^{-4}u\,dt)}&=\overline{t^4(t^{-2}u)d(t^{-1})}=-\overline{u\,dt}=-\overline{t^{-4}u\,dt },
\end{align*} 
whereby the last identity follows from the recursion relation \eqnref{recursionreln1} with $k=0$.  Setting $\omega_{-k}=\overline{t^{-k}u\,dt}$, $k=1,2,3,4$, then $\sigma(\omega_{-1})=-\omega_{-3}$,  and $\sigma(\omega_{-l})=-\omega_{-l}$ for $l=2,4$.

\begin{thm} Let $\mathfrak g$ be a simple finite dimensional Lie algebra over the complex numbers with   the Killing form $(\,|\,)$ and define $\psi_{ij}(c)\in\Omega_R^1/dR$ by
\begin{equation}
\psi_{ij}(c)=\begin{cases} 
\omega_{i+j}&\quad \text{ for }\quad i+j=1,0,-1,-2 \\
P_{-3,i+j-2}(c) (\omega_{-3}+c\omega_{-1})&\quad \text{for} \quad i+j =2n-1\geq 3,\enspace n\in\mathbb Z, \\
P_{-3,i+j-2}(c) (c\omega_{-3}+\omega_{-1})&\quad \text{for} \quad i+j =-2n+1\leq - 3, n\in\mathbb Z, \\
P_{-4,|i+j|-2}(c) \omega_{-4} +P_{-2,|i+j|-2}(c)\omega_{-2}&\quad\text{for}\quad |i+j| =2n \geq 2, n\in\mathbb Z. \\
\end{cases}
\end{equation}
The universal central extension of the Date-Jimbo-Kashiwara-Miwa  algebra is the $\mathbb Z_2$-graded Lie algebra 
$$
\widehat{\mathfrak g}=\widehat{\mathfrak g}^0\oplus \widehat{\mathfrak g}^1,
$$
where
$$
\widehat{\mathfrak g}^0=\left(\mathfrak g\otimes \mathbb C[t,t^{-1}]\right)\oplus \mathbb C\omega_{0},\qquad \widehat{\mathfrak g}^1=\left(\mathfrak g\otimes \mathbb C[t,t^{-1}]u\right)\oplus \mathbb C\omega_{-4}\oplus \mathbb C\omega_{-3}\oplus \mathbb C\omega_{-2}\oplus \mathbb C\omega_{-1}
$$
with bracket
\begin{align*}
[x\otimes t^i,y\otimes t^j]&=[x,y]\otimes t^{i+j}+\delta_{i+j,0}j(x,y)\omega_0, \\ \\
[x\otimes t^{i-1}u,y\otimes t^{j-1}u]&=[x,y]\otimes (t^{i+j+2}-2ct^{i+j}+t^{i+j-2}) \\
 &\hskip 40pt+\left(\delta_{i+j,-2}(j+1) -2cj\delta_{i+j,0} +(j-1)\delta_{i+j,2}\right)(x,y)\omega_0, \\ \\
[x\otimes t^{i-1}u,y\otimes t^{j}]&=[x,y]u\otimes t^{i+j-1}+ j(x,y)\psi_{ij}(c).
\end{align*}

\end{thm}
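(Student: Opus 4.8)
The plan is to read the theorem off from C.\ Kassel's description of the universal central extension recalled at the start of Section~2, together with the basis of $\Omega_R^1/dR$ furnished by the cited theorem of Bremner (Theorem~3.4 of \cite{MR1303073}, specialized to $n=4$ and $p(t)=t^4-2ct^2+1$), and then to translate the resulting cocycle into the three polynomial families built in Section~3. By Kassel, the universal central extension of $\mathfrak g\otimes R$ is $(\mathfrak g\otimes R)\oplus\Omega_R^1/dR$ with $\Omega_R^1/dR$ central and $[x\otimes f,y\otimes g]=[x,y]\otimes fg+(x,y)\overline{f\,dg}$. Writing $R=\mathbb C[t,t^{-1}]\oplus\mathbb C[t,t^{-1}]u$ for the parity decomposition, Bremner's theorem identifies $\Omega_R^1/dR=\mathbb C\omega_0\oplus\mathbb C\omega_{-1}\oplus\mathbb C\omega_{-2}\oplus\mathbb C\omega_{-3}\oplus\mathbb C\omega_{-4}$ with $\omega_0=\overline{t^{-1}dt}$ and $\omega_{-k}=\overline{t^{-k}u\,dt}$. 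Because $u^2\in\mathbb C[t,t^{-1}]$, because $\omega_0$ is represented by a form in the even part, and because each $\omega_{-k}$ is represented by $t^{-k}u\,dt$ with $t^{-k}u$ odd, the decomposition $\widehat{\mathfrak g}=\widehat{\mathfrak g}^0\oplus\widehat{\mathfrak g}^1$ of the statement is precisely the parity $\mathbb Z_2$-grading of the Kassel algebra; in particular no Jacobi identity needs checking and it remains only to compute the three brackets on the indicated spanning set.

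The first two brackets follow directly from \propref{cocyclecalc}: for $[x\otimes t^i,y\otimes t^j]$ one gets $[x,y]\otimes t^{i+j}+j\delta_{i+j,0}(x,y)\omega_0$, and for $[x\otimes t^{i-1}u,y\otimes t^{j-1}u]$ one substitutes $t^{i-1}u\cdot t^{j-1}u=t^{i+j-2}u^2=t^{i+j+2}-2ct^{i+j}+t^{i+j-2}$ in the first term and quotes the second congruence of \propref{cocyclecalc} for the cocycle $(x,y)\overline{t^{i-1}u\,d(t^{j-1}u)}$. For the mixed bracket $f=t^{i-1}u$, $g=t^j$ give $fg=t^{i+j-1}u$ and $\overline{f\,dg}=j\,\overline{t^{i+j-2}u\,dt}$, so the whole matter reduces to expanding $\eta_N:=\overline{t^{N}u\,dt}$, with $N=i+j-2$, in the basis $\{\omega_{-1},\omega_{-2},\omega_{-3},\omega_{-4}\}$ and comparing with the four-clause formula defining $\psi_{ij}(c)$.

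This expansion is the crux. For $N\ge 0$ the relation \eqnref{recursionreln1} has nonzero leading coefficient $6+2N$ and expresses $\eta_N$ through $\eta_{N-2}$ and $\eta_{N-4}$, so by induction $\eta_N=P_{-4,N}\,\omega_{-4}+P_{-3,N}\,\omega_{-3}+P_{-2,N}\,\omega_{-2}+P_{-1,N}\,\omega_{-1}$, where $P_{-4,N},P_{-3,N},P_{-2,N},P_{-1,N}$ are the solutions of the $P$-recursion normalized by the four sets of initial conditions of Section~3 (the range $N\in\{-1,-2,-3,-4\}$ being the initial conditions themselves). By parity of \eqnref{recursionreln1} only the pair $P_{-4,N},P_{-2,N}$ occurs for $N$ even and only the pair $P_{-3,N},P_{-1,N}$ for $N$ odd, and the identity $P_{-1,2n-3}=cP_{-3,2n-3}$ $(n\ge 2)$ from the two Gegenbauer subsections of Section~3 consolidates the odd expansion as $P_{-3,N}(\omega_{-3}+c\omega_{-1})$. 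For $N\le -5$ I would not run the recursion backwards but instead use the algebra automorphism $\sigma$ with $\sigma(t)=t^{-1}$, $\sigma(u)=t^{-2}u$: since $d(t^{-1})=-t^{-2}dt$ one has $\sigma(\eta_N)=-\eta_{-N-4}$, so combining this with the already recorded $\sigma(\omega_{-1})=-\omega_{-3}$, $\sigma(\omega_{-3})=-\omega_{-1}$, $\sigma(\omega_{-2})=-\omega_{-2}$, $\sigma(\omega_{-4})=-\omega_{-4}$ produces the expansion of $\eta_N$ for $N\le -4$ from that of $\eta_{-N-4}$ (with $-N-4\ge 0$) by interchanging the $\omega_{-1}$- and $\omega_{-3}$-coefficients --- which is exactly what moves the factor $c$ onto $\omega_{-3}$ and yields the $(c\omega_{-3}+\omega_{-1})$ of the third clause. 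A final exponent count then matches $\eta_{i+j-2}$ with the four clauses: $P_{-4,|i+j|-2}\,\omega_{-4}+P_{-2,|i+j|-2}\,\omega_{-2}$ when $i+j$ is even with $|i+j|\ge 2$, the forms $P_{-3,|i+j|-2}(\omega_{-3}+c\omega_{-1})$ and $P_{-3,|i+j|-2}(c\omega_{-3}+\omega_{-1})$ for $i+j$ positive resp.\ negative odd with $|i+j|\ge 3$, and $\omega_{i+j-2}$ when $i+j\in\{1,0,-1,-2\}$, where $\eta_{i+j-2}$ is a single basis vector. The sole genuine difficulty is bookkeeping in this last step --- tracking the two parities, the elliptic pair $P_{-4},P_{-2}$, the Gegenbauer pair $P_{-3},P_{-1}$, and the signs from $\sigma$ --- no idea beyond \eqnref{recursionreln1}, \propref{cocyclecalc}, the automorphism $\sigma$, and the generating-function computations of Section~3 being needed.
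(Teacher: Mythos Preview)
Your proposal is correct and follows essentially the same route as the paper's own proof: both invoke \propref{cocyclecalc} for the first two brackets, reduce the mixed bracket to expanding $\overline{t^{i+j-2}u\,dt}$ in the basis $\{\omega_{-1},\omega_{-2},\omega_{-3},\omega_{-4}\}$ via the recursion \eqnref{recursionreln1} and induction for nonnegative exponents (using $P_{-1,2n-3}=cP_{-3,2n-3}$ to collapse the odd case into a single $P_{-3}$-coefficient), and then apply the automorphism $\sigma$ to transport these expansions to negative exponents by swapping the $\omega_{-1}$ and $\omega_{-3}$ coefficients. Your final bookkeeping (writing $\omega_{i+j-2}$ in the base case and using $|i+j|-2$ as the $P_{-3}$-index in the negative odd case) in fact matches what the paper's proof itself derives.
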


\begin{proof}
The first two equalities follow from \propref{cocyclecalc}.  For the last one we first observe that for $k=i+j-2\neq -3$,
\begin{align*}
j\omega_{ij}(c)=\overline{ t^{i-1}u\,d( t^{j})}&=j\overline{t^{i+j-2}u\,dt} \\
&=j\left(\frac{-2(k-3)\overline{t^{k-4}u\,dt} +4kc\overline{t^{k-2}u\,dt}}{6+2k}\right),
\end{align*}
where the last equality is derived from \eqnref{recursionreln1}.
Then by setting $k=0,1,2,3,4,5$ in \eqnref{recursionreln1}
\begin{equation*}
(6+2k)\overline{t^{k}u\,dt}   
=-2(k-3)\overline{t^{k-4}u\,dt} +4kc\overline{t^{k-2}u\,dt}.
\end{equation*}
gives us 
\begin{align*}
6\overline{u\,dt}  &=6\overline{t^{-4}u\,dt}  , \\
 8\overline{tu\,dt}   &=4\overline{t^{-3}u\,dt} +4c\overline{t^{-1}u\,dt}, \\
  10\overline{t^{2}u\,dt}   
&=2\overline{t^{-2}u\,dt} +8c\overline{ u\,dt}, \\ 
12\overline{t^{3}u\,dt}   
&=12c\overline{tu\,dt} ,\\
14\overline{t^{4}u\,dt}   
&=-2\overline{u\,dt} +16c\overline{t^{2}u\,dt}, \\
16\overline{t^{5}u\,dt}   
&=-4\overline{tu\,dt} +20c\overline{t^{3}u\,dt}, \\
(6+2k)\overline{t^{k}u\,dt}   
&=-2(k-3)\overline{t^{k-4}u\,dt} +4kc\overline{t^{k-2}u\,dt} .
\end{align*}
Hence when $i+j-2=k=0,1,2,3,4,5$
\begin{align*}
\overline{u\,dt}  &=\omega_{-4} , \\
\overline{tu\,dt}   &=\frac{1}{2}\left(\omega_{-3} +c\omega_{-1}\right), \\
 \overline{t^{2}u\,dt}   
&=\frac{1}{5} \omega_{-2} +\frac{4c}{5}\omega_{-4}, \\ 
\overline{t^{3}u\,dt}   
&=\frac{c}{2}\left(\omega_{-3} +c\omega_{-1}\right), \\
 \overline{t^{4}u\,dt}   
&=-\frac{1}{7}\overline{u\,dt} +\frac{8}{7}c\overline{t^{2}u\,dt}=-\frac{1}{7}\omega_{-4} +\frac{8}{7}c\left(\frac{1}{5} \omega_{-2} +\frac{4c}{5}\omega_{-4}\right)\\
&=\left(\frac{32c^2-5}{35}\right)\omega_{-4} +\frac{8}{35}c \omega_{-2} , \\ 
 \overline{t^{5}u\,dt}   
&=-\frac{1}{8} \left(\omega_{-3} +c\omega_{-1}\right)+\frac{5c^2}{8}\left(\omega_{-3} +c\omega_{-1}\right) \\
&=\frac{5c^2-1}{8} \left(\omega_{-3} +c\omega_{-1}\right),
\\
\overline{t^{k}u\,dt}    
&=\frac{-2(k-3)\overline{t^{k-4}u\,dt} +4kc\overline{t^{k-2}u\,dt}}{6+2k} .
\end{align*}
Thus by induction using the last equation above for $i+j-2=k=2n-3\geq 1$, $n\in\mathbb Z$, we have 
\begin{align}\label{oddcase}
\omega_{ij}(c)
&=P_{-3,i+j-2}(c)\left(\omega_{-3} +c\omega_{-1}\right),
\end{align}
and for $i+j-2=k=2n-2\geq 0$, $n\in\mathbb Z$, we have 
\begin{align}\label{evencase}
\omega_{ij}(c)
&=P_{-4,i+j-2}(c) \omega_{-4} +P_{-2,i+j-2}(c)\omega_{-2}.
\end{align}
Applying $\sigma$ to \eqnref{oddcase} for $i+j-2=k=2n-3\geq 1$ to obtain 
\begin{align*}
j\sigma(\omega_{ij}(c))=\overline{ t^{-i+1}u\,d( t^{-j})}&=-j\overline{t^{-i-j-2}u\,dt} \\
&=j\sigma\left(P_{-3,i+j-2}(c)\left(\omega_{-3} +c\omega_{-1}\right) \right) \\ 
&=-jP_{-3,i+j-2}(c)\left(\omega_{-1} +c\omega_{-3}\right).
\end{align*}
Hence for $i+j-2=2n-3\geq 1$
\begin{align*}
\omega_{-i,-j}(c)=\overline{t^{-i-j-2}u\,dt} 
&=P_{-3,i+j-2}(c)\left(\omega_{-1} +c\omega_{-3}\right).
\end{align*}
Setting $i'=-i$ and $j'=-j$ we get for $i'+j'-2=-k-4=-2n+3\leq -5$
\begin{align*}
\omega_{i'j'}(c)=\overline{t^{i'+j'-2}u\,dt} 
&=P_{-3,|i'+j'|-2}(c)\left(\omega_{-1} +c\omega_{-3}\right).
\end{align*}

Similarly if  we apply $\sigma$ to \eqnref{evencase} for $i+j=2n\geq 2$, $n\in\mathbb Z$, we obtain 
\begin{align*}
j\sigma(\omega_{ij}(c))=\overline{ t^{-i+1}u\,d( t^{-j})}&=-j\overline{t^{-i-j-2}u\,dt} \\
&=j\sigma\left(P_{-4,i+j-2}(c) \omega_{-4} +P_{-2,i+j-2}(c)\omega_{-2} \right) \\ 
&=-j\left(P_{-4,i+j-2}(c) \omega_{-4} +P_{-2,i+j-2}(c)\omega_{-2}\right)
\end{align*}
Hence for $i+j=2n\geq 2$
\begin{align*}
\omega_{-i,-j}(c)=\overline{t^{-i-j-2}u\,dt} 
&=P_{-4,i+j-2}(c) \omega_{-4} +P_{-2,i+j-2}(c)\omega_{-2}.
\end{align*}
Setting $i'=-i$ and $j'=-j$ we get for $i'+j'=-2n\leq  -2$
\begin{align*}
\omega_{i'j'}(c)=\overline{t^{i'+j'-2}u\,dt} 
&=P_{-4,|i'+j'|-2}(c) \omega_{-4} +P_{-2,|i'+j'|-2}(c)\omega_{-2}.
\end{align*}
\end{proof}
One might want to compare the above theorem with the results that M.  Bremner obtained for the elliptic and four point affine Lie algebra cases (\cite[Theorem 4.6]{MR1303073} and \cite[Theorem 3.6]{MR1249871} respectively).

\def\cprime{$'$} \def\cprime{$'$} \def\cprime{$'$}

\end{document}